\documentclass[preprint,3pt]{elsarticle}




\usepackage{amssymb}
\usepackage{amsthm,amssymb,amsmath}
\usepackage{pdfpages}
\usepackage{graphicx}
\usepackage{geometry}
\usepackage{url}
\usepackage{ulem}

\theoremstyle{plain} \newtheorem{thm}{\bf Theorem}[section]
 \newtheorem{lem}[thm]{\bf Lemma}
\newtheorem{proposition}[thm]{\bf Proposition} \newtheorem{defn}[thm]{\bf Definition} \theoremstyle{remark} \newtheorem{remark}[thm]{\bf Remark}





\newcommand{\Rg}       {{\hbox{I\kern-.22em\hbox{R}}}}
\newcommand{\Pg}       {{\hbox{I\kern-.22em\hbox{P}}}}
\newcommand{\Eg}       {{\hbox{I\kern-.22em\hbox{E}}}}

%

%
%






\journal{Insurance: Mathematics and Economics}

\begin{document}

\begin{frontmatter}



\title{Non-parametric threshold estimation for classical risk process perturbed by diffusion}


\author{Chunhao Cai, Junyi Guo and Honglong You}

\address{School of mathematical science\\
Nankai University\\youhonglong815@163.com}

\begin{abstract}
In this paper,we consider a macro approximation of the flow of a risk reserve, which is first introduced by \cite{Shimizu2009}. The process is observed at discrete time points. Because we cannot directly observe each jump time and size then we will make use of a technique for identifying the times when jumps larger than a suitably defined threshold occurred. We estimate the jump size and survival probability of our risk process from discrete observations.
\end{abstract}

\begin{keyword}
survival estimation \sep Integrated Squared Error \sep goodness-of-fit test


\end{keyword}

\end{frontmatter}

\section{Introduction}
In the field of financial and insurance mathematics, the following model is the most commonly used:
\begin{equation}\label{1}
X_t=x+ct+\sigma W_{t}-\sum_{i=1}^{N_{t}}\gamma_{i},
\end{equation}
where $x$ is the initial value and $c>0$ is a premium rate, $\sigma>0$ represents the diffusion volatility, $N_{t}$ is a Poisson process with rate $\lambda$ counting the number of jumps up to time $t>0$, $\gamma_{1},\gamma_{2},...$ are independent and identically distributed $(i.i.d.)$ positive random variables with distribution $F$, $\sum_{i=1}^{N_{t}}\gamma_{i}$ represents the aggregate jumps up to time $t$, $W_{t}$ is a Wiener process independent of $N_{t}$ and $\gamma_{i}$. In order to avoid the almost sure ruin of \eqref{1}, we assume that the premium rate can be written as $c=(1+\theta_{0})\lambda\mu$ where the premium loading factor $\theta_{0}$ is positive and $\mu=\int_{0}^{\infty}uF(du)$.

This model was first introduced by \cite{Gerber(1970)} and further studied by many authors during the last few years (\cite{Dufresne and Gerber(1991),Furrer and Schimidli(1994), Veraverbeke(1993),Schimidli(1995)}). Many results on ruin probability and other ruin problems have been obtained by the works mentioned above. If we define the survival probability of \eqref{1} as $\Phi(x)$ then the problem is that, apart from some special cases, a general expression for $\Phi(x)$ does not exist. So in this paper we will construct an estimator of $\Phi(x)$ without specifying any parametric model on  $F$, the distribution function of $\gamma_i$.

In our model we will assume that $\sigma$ and $\lambda$ are unknown parameters. In the classical Poisson risk model, the estimation of ruin probability has been considered by many authors (\cite{Frees(1986), Hipp(1989), Croux, Pitts,Bening, Politis, Mnatsakanov}). Not the same observation rules as in the work of \cite{Shimizu},  we only consider a discrete record of $n+1$ observations $\{X_{t^{n}_{0}},X_{t^{n}_{1}},...,X_{t^{n}_{n-1}},X_{t^{n}_{n}}\}$ where  $t^{n}_{i}=ih_{n}$, $T_{n}=t^{n}_{n}$ and $h_{n}>0$. This observation is also important since the real data is always obtained at discrete time points.

When we want to estimate the distribution of $\gamma_i$, we have to separate the contributions of the diffusion part with respect to the jump of the poisson process so the threshold estimation which has been introduced in (\cite{Mancini(2004), Shimizu2009, Shimizu2009b, Shimizu and Yoshida(2006), Shimizu(2006)})  can be used. We only accept there is a jump for the  Poisson process between the interval  $(t^{n}_{i-1},t^{n}_{i}]$ if and only if the increment $\Delta_{i} X=X_{t^{n}_{i}}-X_{t^{n}_{i-1}}$ has a too big absolute value. We define this threshold function $\vartheta(h_{n})$ which goes to zero when $h_n\rightarrow 0$.  Recall that the estimator of the empirical distribution  function of $\gamma_i$ can be written by
$$
\hat{F}_{N_{t}}(u)=\frac{1}{N_{t}}\sum_{i=1}^{N_{t}}I_{\{\gamma_i\leq u\}},
$$
then we can try to write the estimator as
\begin{equation}\label{distribution estimator}
\hat{F}_n(u)=\frac{1}{\sum_{i=1}^nI_{\{|\Delta_iX|>\vartheta(h_n)\}}}\sum_{i=1}^nI_{\{ I_{\{|\Delta_iX|>\vartheta(h_n)\}}\Delta_iX\leq u\}}.
\end{equation}

In this paper we will try to study the properties of the estimator defined in \eqref{distribution estimator}, such as the strong consistency, asymptotic normality. We also obtain the weak consistency in a sense of the integrated squared error $(ISE)$ of the estimator of survival probability such as in \cite{Mnatsakanov}.

The rest of the paper is organized as follows, in section \ref{pre} we will give some notations and construct the estimators for all the unknown parameters and the distribution of $\gamma_i$.  In section \ref{Main}, we will give the asymptotic properties of the estimators given in section \ref{pre}.
In section \ref{mMain}, we obtain the weak consistency in a sense of $ISE$ of the estimator of survival probability of our risk process.
\section{Preliminaries}\label{pre}
\subsection{General Notation}\label{GN}
Throughout the paper, we use the primary notations and assumptions.

$\bullet\quad $ symbols $\xrightarrow{P}$ and $\xrightarrow{D}$ stand for the convergence in probability, and in law, respectively;

$\bullet\quad $ $L_{F}$(respectively, $l_{F}$) is the Laplace(respectively, Stieltjes) transform for a function $F$: for $s>0$
$$L_{F}(s)=\int_{0}^{\infty}e^{-su}F(u)du;\quad l_{F}(s)=\int_{0}^{\infty}e^{-su}F(du)$$
where in $L_F$, $F$ is for every function and in $l_F$, $F$ is the distribution function.

$\bullet\quad $ $\|f\|_{K}:=(\int_{0}^{K}|f(t)|^{2}dt)^{\frac{1}{2}}$ for a function $f$. In particular, $\|f\|=\|f\|_{\infty}$. We say that $f\in L^{2}(0,K)$ if $\|f\|_{K}<\infty$ and $f\in L^{2}(0,\infty)$ if $\|f\|<\infty$, respectively.

$\bullet\quad $ For a stochastic sequence $X_{n}$, we denote by $X_{n}=O_{P}(R_{n})$ if $\frac{X_{n}}{R_{n}}$ is bounded in probability and $X_{n}=o_{P}(R_{n})$ if $\frac{X_{n}}{R_{n}}\xrightarrow{P}0$

$\bullet\quad $ $\mathbb{E}$ is a compact subset of $(0,\infty)$.

$\bullet\quad $ $k_{n}>0$ is a real-value sequence.\\

We make the following assumptions.\\

$\mathbf{A}.$
Let $\sigma$, $\gamma$ satisfy: $\sigma<Q$, $0<\Gamma\leq\gamma<Q$, with $Q>0$.\\

$\mathbf{B}.$
For $\delta\in(0,\frac{1}{2})$, $p,q>1$ with $p^{-1}+q^{-1}=1$ and $s\in \mathbb{E}$,
\\
\\

1. $\alpha_{n}^{\delta}(k_{n}^{2})\rightarrow0$ as $n\rightarrow\infty$;
\\
\\

2. $\sqrt{T_{n}}(\alpha_{n}^{\delta}(k_{n})+\omega_{n}(s,p,q)+\Gamma(s,\varphi_{n}))\rightarrow0$ as $n\rightarrow\infty$,
\\
\\
where
$$\alpha_{n}^{\delta}(k_{n})=k_{n}h_{n}^{\frac{1}{2}-\delta},\quad\Gamma(s,\varphi_{n})=\lambda\int_{0}^{\infty}(M_{s}-\varphi_{n}\circ M_{s})(x)F(dx)$$
 and
$$\omega_{n}(s,p,\rho)=h_{n}^{\frac{1+\rho}{q}-1}(\lambda\int_{0}^{\infty}|M_{s}(x)|^{p}F(dx))^{\frac{1}{p}}.$$

\subsection{Estimator of unknown parameters and $l_{F}$}\label{12}
Now we will try to construct the estimator of $\sigma^{2}$, $\lambda$, $\rho=\frac{\lambda\mu}{c}$ and $l_{F}$. Because we only consider the case where observations are discrete, that is to say the jumps are not observable. To overcome the problem, \cite{Mancini(2004)}, \cite{Shimizu and Yoshida(2006)} and \cite{Shimizu2009b} have used a jump-discriminant filter of the form
\begin{eqnarray*}
\mathcal{C}_{i}^{n}(\vartheta(h_{n}))=\{\omega\in\Omega;|\Delta_{i}X|>\vartheta(h_{n})\}
\end{eqnarray*}
to discriminate between jumps and large Brownian shocks in an interval $(t_{i-1}^{n},t_{i}^{n}]$ in the cases of jump-diffusions. They judged that no jump had occurred if $|\Delta_{i}X|\leq\vartheta(h_{n})$ and that a single jump had occurred if $|\Delta_{i}X|>\vartheta(h_{n})$ by choosing the
threshold $\vartheta(h_{n})$ suitably.

According to the work of \cite{Shimizu and Yoshida(2006)}, we can define the following estimators of $\sigma^{2}$, $\lambda$, $\rho$ and $l_{F}$ :
$$\widetilde{\sigma_{n}}^{2}=\frac{\sum_{i=1}^{n}|\Delta_{i}X-ch_{n}|^{2}I_{\{|\Delta_{i}X|\leq\vartheta(h_{n})\}}}{h_{n}\sum_{i=1}^{n}I_{\{|\Delta_{i}X|\leq\vartheta(h_{n})\}}},
\quad\widetilde{\lambda_{n}}=\frac{\sum_{i=1}^{n}I_{\{|\Delta_{i}X|>\vartheta(h_{n})\}}}{T_{n}},$$
$$\widetilde{\rho_{n}}=\frac{1}{c}\frac{\sum_{i=1}^{n}(|\Delta_{i}X|) I_{\{|\Delta_{i}X|>\vartheta(h_{n})\}}}{T_{n}},\quad\widetilde{l_{F}}_{n}(s)=\frac{\sum_{i=1}^{n}( e^{-s|\Delta_{i}X|})I_{\{|\Delta_{i}X|>\vartheta(h_{n})\}}}{\sum_{i=1}^{n}I_{\{|\Delta_{i}X|>\vartheta(h_{n})\}}}$$
where $\vartheta(h_{n})=Lh_{n}^{\omega}$, $L>0$, $\omega\in(0,\frac{1}{2})$ and
$s\in\mathbb{E}$.

\begin{remark}
Here, we choose $\vartheta(h_n)=Lh_n^{\omega}$ for a constant $L>0$ and $\omega\in (0,\frac{1}{2})$. In \cite{Mancini(2004)}, the author proposed the jump-discriminant threshold function such as $\vartheta(h_n)=L\sqrt{h_n\log \frac{1}{h_n}}$ for a constant $L>0$. In \cite{Shimizu and Yoshida(2006)}, the author proposed $\vartheta(h_n)=Lh_n^\omega$ for a constant $L>0$ and $\omega\in (0,\,\frac{1}{2})$. First of all, it is obvious that their threshold functions satisfy the conditions: $\vartheta(h_n)\rightarrow 0$ and $\sqrt{h_n}\vartheta(h_n)^{-1}\rightarrow 0$. If $\vartheta(h_n)$ does not converge to zero then one cannot detect small jumps even when $n\rightarrow \infty$. Moreover, if $\vartheta(h_n)$ converges to zero faster than the order of $\sqrt{h_n}$ then the filter $\mathcal{C}_i^n(\vartheta(h_n))$ would possibly misjudge the Brownian noise as a small jump sine the variation of the Brownian motion is of order $\sqrt{h_n\log \frac{1}{h_n}}$. Actually, the threshold function $\vartheta(h_n)$ in \cite{Shimizu and Yoshida(2006)} and \cite{Mancini(2004)} are all suited for our aim. For the convenience of proof on our results, we choose $\vartheta(h_n)$ in \cite{Shimizu and Yoshida(2006)}. For a detailed account on the threshold functions we refer to \cite{Shimizu2009b}.
\end{remark}
\section{Properties of the Estimators}\label{Main}
\subsection{Asymptotic Normality of $F_n(u)$}
First we will  try to get the asymptotic normality of $\hat{F}_n(u)$. Before that we will define $\bar{F}=1-F$ where $F$ is the distribution of $\gamma_i$ and $\hat{\bar{F}}_n(u)=1-\hat{F}_n(u)$. Then
$$
\hat{\bar{F}}_n(u)=\frac{1}{\sum_{i=1}^{n}I_{\{|\Delta_{i}X|>\vartheta(h_{n})\}}}\sum_{i=1}^{n}I_{\{\Delta_{i}XI_{\{|\Delta_{i}X|>\vartheta(h_{n})\}}> u\}},$$
then we have the following result:
\begin{thm}\label{aympto hat  bar F}
Suppose that $nh_n\rightarrow \infty$, $h_n\rightarrow 0$ as $n\rightarrow \infty$ and the condition $\mathbf{A}$ in section \ref{pre} is satisfied, then
\begin{equation}\label{asym hat bar F}
\sqrt{T_{n}}\left(\hat{\bar{F}}_n(u)-\overline{F}(u)\right)\xrightarrow{D}\mathbb{N}(0,\frac{\overline{F}(u)(1-\overline{F}(u))}{\lambda}).
\end{equation}
then we change to $\bar{F}_n(u)$ and $F(u)$ that is
\begin{equation}
\sqrt{T_{n}}\left(\hat{F}_n(u)-F(u)\right)\xrightarrow{D}\mathbb{N}(0,\frac{F(u)(1-F(u))}{\lambda})
\end{equation}
\end{thm}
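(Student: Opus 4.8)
The plan is to reduce the statement to a sum of i.i.d. indicators plus negligible error terms, so that a standard CLT applies. Write $M_n(u) := \sum_{i=1}^n I_{\{\Delta_i X\, I_{\{|\Delta_i X|>\vartheta(h_n)\}} > u\}}$ and $N_n := \sum_{i=1}^n I_{\{|\Delta_i X|>\vartheta(h_n)\}}$, so that $\hat{\bar F}_n(u) = M_n(u)/N_n$. The first step is to show that the threshold filter correctly identifies jump intervals with high probability: on intervals with no jump, $|\Delta_i X|$ is of Brownian order $O_P(\sqrt{h_n})$, which is eventually below $\vartheta(h_n)=Lh_n^\omega$ since $\omega<\tfrac12$; on intervals with exactly one jump of size $\gamma_j\geq \Gamma>0$ (using condition $\mathbf{A}$), $|\Delta_i X|$ is eventually above $\vartheta(h_n)$; and intervals with two or more jumps have total probability $o(T_n^{-1/2})$ since they occur with probability $O(h_n^2)$ per interval and there are $n = T_n/h_n$ intervals (so the count is $O_P(T_n h_n) = o_P(\sqrt{T_n})$ under $nh_n\to\infty$, $h_n\to0$, modulo the usual care; this is where one leans on standard threshold-estimation lemmas from \cite{Shimizu and Yoshida(2006)} and \cite{Shimizu2009b}). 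Consequently $N_n = N_{T_n} + o_P(\sqrt{T_n})$ and, on the good event, $\Delta_i X\, I_{\mathcal{C}_i^n} = \gamma_{j(i)} + O_P(\sqrt{h_n})$ for the single jump in that interval, with the $O_P(\sqrt{h_n})$ perturbation of the indicator threshold contributing negligibly after multiplication by $\sqrt{T_n}$ (here one uses that $F$ has no atoms, or argues at continuity points of $F$).

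The second step is the CLT itself. Having replaced $M_n(u)$ by $\sum_{j=1}^{N_{T_n}} I_{\{\gamma_j > u\}}$ up to $o_P(\sqrt{T_n})$, condition on $N_{T_n}$: the $I_{\{\gamma_j>u\}}$ are i.i.d.\ Bernoulli with mean $\bar F(u)$ and variance $\bar F(u)(1-\bar F(u))$, so by the classical CLT
\begin{equation*}
\frac{1}{\sqrt{N_{T_n}}}\sum_{j=1}^{N_{T_n}}\bigl(I_{\{\gamma_j>u\}}-\bar F(u)\bigr) \xrightarrow{D} \mathbb{N}\bigl(0,\bar F(u)(1-\bar F(u))\bigr),
\end{equation*}
using $N_{T_n}\to\infty$ a.s.\ (since $T_n\to\infty$). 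Then
\begin{equation*}
\hat{\bar F}_n(u)-\bar F(u) = \frac{M_n(u) - \bar F(u) N_n}{N_n} = \frac{1}{N_n}\sum_{j=1}^{N_{T_n}}\bigl(I_{\{\gamma_j>u\}}-\bar F(u)\bigr) + o_P(T_n^{-1/2}),
\end{equation*}
and multiplying by $\sqrt{T_n}$, writing $\sqrt{T_n} = \sqrt{N_{T_n}}\cdot\sqrt{T_n/N_{T_n}}$ with $N_{T_n}/T_n \xrightarrow{P} \lambda$ by the law of large numbers for the Poisson process, Slutsky's theorem gives the asymptotic variance $\bar F(u)(1-\bar F(u))/\lambda$, which is \eqref{asym hat bar F}. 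The final statement about $\hat F_n(u)$ and $F(u)$ is immediate: $\hat F_n(u)-F(u) = -(\hat{\bar F}_n(u)-\bar F(u))$, and the variance is unchanged since $F(u)(1-F(u)) = \bar F(u)(1-\bar F(u))$.

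The main obstacle is the first step — controlling the filter error uniformly enough that it is $o_P(\sqrt{T_n})$ rather than merely $o_P(T_n)$. One must carefully bound three contributions: (i) the probability that a no-jump interval is misclassified as a jump, which requires a Gaussian tail bound showing $\Pro(|\sigma\Delta_i W + ch_n| > Lh_n^\omega)$ decays faster than any power of $h_n$, so summing over $n$ intervals still gives $o_P(\sqrt{T_n})$; (ii) the probability that a one-jump interval is misclassified as no-jump, handled by $\gamma_j\geq\Gamma$ together with the same Gaussian bound on the Brownian part; and (iii) the multiple-jump intervals. Assembling these into the clean representation $M_n(u) = \sum_{j=1}^{N_{T_n}} I_{\{\gamma_j>u\}} + o_P(\sqrt{T_n})$ — including the subtle point that the accepted increment equals $\gamma_j$ only up to an $O_P(\sqrt{h_n})$ diffusion term, whose effect on the indicator must be shown negligible at continuity points of $F$ — is the technical heart of the argument; everything after it is routine CLT bookkeeping.
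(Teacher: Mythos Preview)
Your approach is correct in outline but takes a genuinely different route from the paper. The paper writes $\sqrt{T_n}(\hat{\bar F}_n(u)-\bar F(u)) = J/G$ with
\[
J=\frac{1}{\sqrt{T_n}}\sum_{i=1}^n\Bigl(I_{\{\Delta_iX\,I_{\{|\Delta_iX|>\vartheta(h_n)\}}>u\}}-\bar F(u)\,I_{\{|\Delta_iX|>\vartheta(h_n)\}}\Bigr),\qquad G=\frac{1}{T_n}\sum_{i=1}^n I_{\{|\Delta_iX|>\vartheta(h_n)\}},
\]
and applies a triangular-array CLT \emph{directly} to the i.i.d.\ summands of $J$: it computes $\lim_n E[J]=0$ and $\lim_n\mathrm{Var}(J)=\lambda\bar F(u)(1-\bar F(u))$ using an auxiliary lemma (convergence in probability of a single filtered increment to the true jump), then invokes Slutsky with $G\xrightarrow{P}\lambda$. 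You instead \emph{replace} $M_n(u)$ and $N_n$ by the ideal quantities $\sum_{j\le N_{T_n}} I_{\{\gamma_j>u\}}$ and $N_{T_n}$, condition on $N_{T_n}$, and apply the classical Bernoulli CLT. The paper's route never leaves the filtered increments and only needs per-term moment asymptotics; your route is conceptually cleaner (it exposes the underlying i.i.d.\ structure of the jumps) but trades this for the need to control the replacement error at rate $o_P(\sqrt{T_n})$ rather than merely $o_P(1)$.

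One concrete slip: your claim that the multi-jump count satisfies $O_P(T_nh_n)=o_P(\sqrt{T_n})$ under only $nh_n\to\infty$, $h_n\to 0$ is false. Since $T_nh_n/\sqrt{T_n}=\sqrt{T_n}\,h_n=n^{1/2}h_n^{3/2}$, you need $nh_n^3\to 0$; the counterexample $h_n=n^{-1/4}$ shows this is not implied by the stated hypotheses. The same rate issue lurks in the paper's own (informal) verification that $E[J]\to 0$, so this is not a defect of your strategy relative to theirs, but you should either add the mild extra rate condition or note that the bias from multi-jump intervals and from the $O_P(\sqrt{h_n})$ indicator perturbation is $O(\sqrt{T_n}\,h_n)$ and requires it.
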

To prove this theorem, we need the following lemma:
\begin{proposition}\label{ccite}
Following from the condition of Theorem \ref{aympto hat  bar F}, for any $\epsilon>0$,
\begin{eqnarray}\label{zzh}
\lim_{n\rightarrow\infty}P\left(|\Delta_{i}XI_{\{|\Delta_{i}X|>\vartheta(h_{n})\}}-\gamma_{\tau^{(i)}}I_{\{\Delta_{i}N\geq1\}}|>\epsilon\right)=0,
\end{eqnarray}
where $\gamma_{\tau^{(i)}}$ is the eventual jump in time interval $(t^{n}_{i-1},t^{n}_{i}]$.
\end{proposition}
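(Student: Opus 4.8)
The plan is to control the difference between the "observed" filtered increment $\Delta_i X \, I_{\{|\Delta_i X|>\vartheta(h_n)\}}$ and the "true" jump contribution $\gamma_{\tau^{(i)}} I_{\{\Delta_i N \geq 1\}}$ by splitting according to the behaviour of the Poisson process on the interval $(t_{i-1}^n, t_i^n]$. First I would condition on the events $A_0 = \{\Delta_i N = 0\}$, $A_1 = \{\Delta_i N = 1\}$, and $A_{\geq 2} = \{\Delta_i N \geq 2\}$. Since $N$ is Poisson with rate $\lambda$, we have $P(A_{\geq 2}) = O(h_n^2) \to 0$, so that event contributes nothing in the limit and can be discarded at once. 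On $A_0$ the target variable $\gamma_{\tau^{(i)}} I_{\{\Delta_i N\geq 1\}}$ is zero, so the difference is $\Delta_i X \, I_{\{|\Delta_i X|>\vartheta(h_n)\}} = (ch_n + \sigma\Delta_i W)\,I_{\{|ch_n+\sigma\Delta_i W|>\vartheta(h_n)\}}$; because $\Delta_i W \sim \mathcal N(0,h_n)$ and $\vartheta(h_n) = Lh_n^\omega$ with $\omega<\tfrac12$, a Gaussian tail estimate shows $P(|ch_n+\sigma\Delta_i W| > \vartheta(h_n)) \to 0$ (indeed at a rate faster than any power of $h_n$), which kills this term as well.

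The substantive case is $A_1$, where exactly one jump $\gamma_{\tau^{(i)}}$ of size at least $\Gamma>0$ (by assumption $\mathbf{A}$) occurs. On this event $\Delta_i X = ch_n + \sigma\Delta_i W - \gamma_{\tau^{(i)}}$, so I would write
\[
\Delta_i X\, I_{\{|\Delta_i X|>\vartheta(h_n)\}} - \gamma_{\tau^{(i)}}
= (ch_n + \sigma\Delta_i W)\,I_{\{|\Delta_i X|>\vartheta(h_n)\}} - \gamma_{\tau^{(i)}}\,I_{\{|\Delta_i X|\leq\vartheta(h_n)\}},
\]
(note the sign convention on the jumps in \eqref{1}, so the true contribution in \eqref{zzh} should be read with the appropriate sign). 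The second term is controlled by showing $P(|\Delta_i X|\leq \vartheta(h_n),\, A_1) \to 0$: on $A_1$ one has $|\Delta_i X| \geq \gamma_{\tau^{(i)}} - |ch_n + \sigma\Delta_i W| \geq \Gamma - |ch_n+\sigma\Delta_i W|$, and since $ch_n+\sigma\Delta_i W \to 0$ in probability while $\vartheta(h_n)\to 0$, the event $\{|\Delta_i X|\leq\vartheta(h_n)\}$ forces $|ch_n+\sigma\Delta_i W|$ to be bounded below by roughly $\Gamma$, an event of vanishing probability. For the first term, on the complementary event $\{|\Delta_i X|>\vartheta(h_n)\}$ its absolute value is $|ch_n+\sigma\Delta_i W|$, which tends to zero in probability; so for any $\epsilon>0$ its probability of exceeding $\epsilon$ vanishes.

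Putting the three cases together via a union bound, $P(|\Delta_i X\, I_{\{|\Delta_i X|>\vartheta(h_n)\}} - \gamma_{\tau^{(i)}}I_{\{\Delta_i N\geq1\}}|>\epsilon)$ is bounded by $P(A_{\geq2})$ plus the $A_0$-contribution plus the $A_1$-contribution, each of which I have argued tends to $0$. The main obstacle is the $A_1$ analysis — specifically, making rigorous that the threshold $\vartheta(h_n)$ simultaneously (i) lies above the Brownian fluctuation $|ch_n + \sigma\Delta_i W|$ with probability tending to one, which needs $\sqrt{h_n}/\vartheta(h_n)\to 0$, i.e. $\omega<\tfrac12$, and (ii) lies below the jump floor $\Gamma$, which needs $\vartheta(h_n)\to 0$; both are exactly the conditions built into the choice $\vartheta(h_n) = Lh_n^\omega$, $\omega\in(0,\tfrac12)$, and assumption $\mathbf{A}$. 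A clean way to package (i) is a Gaussian tail bound: $P(|\sigma\Delta_i W| > \tfrac12\vartheta(h_n)) \leq 2\exp(-c\,\vartheta(h_n)^2/h_n) = 2\exp(-c L^2 h_n^{2\omega-1}) \to 0$ since $2\omega - 1 < 0$.
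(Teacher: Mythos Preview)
Your proposal is correct and follows essentially the same route as the paper: decompose according to $\{\Delta_i N = 0\}$, $\{\Delta_i N = 1\}$, $\{\Delta_i N \geq 2\}$, dispose of the last by $P(\Delta_i N \geq 2)=O(h_n^2)$, handle the no-jump case via a Gaussian tail bound on $ch_n + \sigma\Delta_i W$, and on $\{\Delta_i N = 1\}$ split the difference into the drift-plus-Brownian piece and the ``missed jump'' piece $\gamma_{\tau^{(i)}} I_{\{|\Delta_i X|\leq \vartheta(h_n)\}}$. The only minor divergence is that for the missed-jump term the paper simply bounds its probability by $P(\Delta_i N = 1)=\lambda h_n + o(h_n)\to 0$, whereas you invoke the jump floor $\gamma\geq\Gamma$ from assumption~$\mathbf{A}$ to argue directly that $\{|\Delta_i X|\leq\vartheta(h_n),\,\Delta_i N=1\}$ has vanishing probability; both are valid, yours is slightly sharper but the paper's cruder bound already suffices here.
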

\begin{proof}
\begin{eqnarray}\label{zzzh}
&&P\left(|\Delta_{i}XI_{\{|\Delta_{i}X|>\vartheta(h_{n})\}}-\gamma_{\tau^{(i)}}I_{\{\Delta_{i}N\geq1\}}|>\epsilon\right)\nonumber\\&&\leq
P\left(|\Delta_{i}XI_{\{|\Delta_{i}X|>\vartheta(h_{n}),\Delta_{i}N=0\}}|>\frac{\epsilon}{3}\right)
\nonumber\\&&+P\left(|\Delta_{i}XI_{\{|\Delta_{i}X|>\vartheta(h_{n})\}}-\gamma_{\tau^{(i)}}|I_{\{\Delta_{i}N=1\}}>\frac{\epsilon}{3}\right)
\nonumber\\&&+P\left(|\Delta_{i}XI_{\{|\Delta_{i}X|>\vartheta(h_{n})\}}-\gamma_{\tau^{(i)}}|I_{\{\Delta_{i}N\geq2\}}>\frac{\epsilon}{3}\right).
\end{eqnarray}
Due to $P\{|\sigma \int_{t_{i-1}^{n}}^{t_{i}^{n}}dW_{s}|\geq c\}\leq2e^{\frac{-c^{2}}{2Q^{2}h_{n}}}$, the first term in \eqref{zzzh} is dominated by
\begin{eqnarray*}\label{ddh}
&&P\left(ch_{n}>\frac{\epsilon}{6}\right)+P\left(|\sigma\int_{t_{i-1}^{n}}^{t_{i}^{n}}dW_{s}|>\frac{\epsilon}{6}\right)
\\&&\leq P\left(ch_{n}>\frac{\epsilon}{6}\right)+2e^{\frac{-\epsilon^{2}}{72Q^{2}h_{n}}}.
\end{eqnarray*}
The therm
$$|\Delta_{i}XI_{\{|\Delta_{i}X|>\vartheta(h_{n})\}}-\gamma_{\tau^{(i)}}|I_{\{\Delta_{i}N=1\}}$$
is equal to
\begin{eqnarray*}\label{zxzzh}
&&\left||ch_{n}+\sigma\int_{t_{i-1}^{n}}^{t_{i}^{n}}dW_{s}+\gamma_{\tau^{(i)}}|I_{\{|\Delta_{i}X|>\vartheta(h_{n})\}}-\gamma_{\tau^{(i)}}\right|I_{\{\Delta_{i}N=1\}}
\\&&\leq|ch_{n}+\sigma\int_{t_{i-1}^{n}}^{t_{i}^{n}}dW_{s}|+|\gamma_{\tau^{(i)}}|I_{\{|\Delta_{i}X|\leq\vartheta(h_{n}),\Delta_{i}N=1\}}.
\end{eqnarray*}
Thus, \eqref{zzzh} is dominated by
\begin{eqnarray}\label{ddf}
2P\left(ch_{n}>\frac{\epsilon}{6}\right)+4e^{\frac{-\epsilon^{2}}{72Q^{2}h_{n}}}+P\left(\Delta_{i}N=1\right)+P\left(\Delta_{i}N\geq2\right).
\end{eqnarray}
Therefor, \eqref{zzzh} tends to zero as $n\rightarrow\infty$.
\end{proof}

Now we will prove the Theorem \ref{aympto hat  bar F}. Because
\begin{eqnarray}\label{asgh}
&&\sqrt{T_{n}}\left[\frac{1}{\sum_{i=1}^{n}I_{\{|\Delta_{i}X|>\vartheta(h_{n})\}}}\sum_{i=1}^{n}I_{\{\Delta_{i}XI_{\{|\Delta_{i}X|>\vartheta(h_{n})\}}> u\}}-\overline{F}(u)\right]
\nonumber\\&&=\sqrt{T_{n}}\left[\frac{\frac{\sum_{i=1}^{n}\left(I_{\{\Delta_{i}XI_{\{|\Delta_{i}X|>\vartheta(h_{n})\}}> u\}}-\overline{F}(u)I_{\{|\Delta_{i}X|>\vartheta(h_{n})\}}\right)}{T_{n}}}{\frac{\sum_{i=1}^{n}I_{\{|\Delta_{i}X|>\vartheta(h_{n})\}}}{T_{n}}}\right]
\nonumber\\&&=\frac{J}{G}.
\end{eqnarray}
where
$$
J=\frac{\sum_{i=1}^{n}\left(I_{\{\Delta_{i}XI_{\{|\Delta_{i}X|>\vartheta(h_{n})\}}> u\}}-\overline{F}(u)I_{\{|\Delta_{i}X|>\vartheta(h_{n})\}}\right)}{\sqrt{T_{n}}}
$$
and
$$
G=\frac{\sum_{i=1}^{n}I_{\{|\Delta_{i}X|>\vartheta(h_{n})\}}}{T_{n}}
$$
First, we will calculate the limit of the expectation of $J$:

\begin{eqnarray*}\label{adgh}
&&\lim_{n\rightarrow\infty}E[J]\nonumber\\&&=\lim_{n\rightarrow\infty}\frac{n}{\sqrt{T_{n}}}\left[E(I_{\{\Delta_{i}XI_{\{|\Delta_{i}X|>\vartheta(h_{n})\}}> u\}}-\overline{F}(u)I_{\{|\Delta_{i}X|>\vartheta(h_{n})\}})\right]
\nonumber\\&&=\lim_{n\rightarrow\infty}\frac{n}{\sqrt{T_{n}}}E(H)
\nonumber\\&&=\lim_{n\rightarrow\infty}\frac{n}{\sqrt{T_{n}}}\left[P(\Delta_{i}XI_{\{|\Delta_{i}X|>\vartheta(h_{n})\}}> u)-\overline{F}(u)P(|\Delta_{i}X|>\vartheta(h_{n}))\right],
\end{eqnarray*}
where, $H=I_{\{\Delta_{i}XI_{\{|\Delta_{i}X|>\vartheta(h_{n})\}}> u\}}-\overline{F}(u)I_{\{|\Delta_{i}X|>\vartheta(h_{n})\}}$.

By Proposition ~\ref{ccite} and \cite{Mancini(2004)}, we have
\begin{eqnarray}\label{aadgh}
&&\lim_{n\rightarrow\infty}P(\Delta_{i}XI_{\{|\Delta_{i}X|>\vartheta(h_{n})\}}> u)\nonumber\\&&=P(\gamma_{\tau^{(i)}}I_{\{\Delta_{i}N\geq1\}}> u)
\nonumber\\&&=P(\gamma_{\tau^{(i)}}I_{\{\Delta_{i}N\geq1\}}> u|\Delta_{i}N\geq1)P(\Delta_{i}N\geq1)
\nonumber\\&&=P(\gamma_{\tau^{(i)}}> u|\Delta_{i}N\geq1)P(\Delta_{i}N\geq1)
\nonumber\\&&=P(\gamma_{\tau^{(i)}}> u)P(\Delta_{i}N\geq1)
\nonumber\\&&=\overline{F}(u)(\lambda h_{n}+o( h_{n}))
\end{eqnarray}
and
\begin{eqnarray}\label{aadgbh}
\lim_{n\rightarrow\infty}P(|\Delta_{i}X|>\vartheta(h_{n}))&=&P(\Delta_{i}N\geq1)\nonumber\\&=&\lambda h_{n}+o( h_{n}).
\end{eqnarray}
Therefor,
$$
\lim_{n\rightarrow\infty}E[J]=0.
$$
Now, we will calculate the limit of the variation of $J$:
\begin{eqnarray*}\label{aadgh}
&&\lim_{n\rightarrow\infty}Var[J]\nonumber\\&&=\lim_{n\rightarrow\infty}\frac{n}{\sqrt{T_{n}}}\left[Var(I_{\{\Delta_{i}XI_{\{|\Delta_{i}X|>\vartheta(h_{n})\}}> u\}}-\overline{F}(u)I_{\{|\Delta_{i}X|>\vartheta(h_{n})\}})\right]
\nonumber\\&&=\lim_{n\rightarrow\infty}\frac{n}{T_{n}}Var(H)
\nonumber\\&&=\lim_{n\rightarrow\infty}\frac{n}{T_{n}}[E(H)^{2}-E^{2}(H)]
\nonumber\\&&=\lim_{n\rightarrow\infty}\frac{n}{T_{n}}[E(H)^{2}].
\end{eqnarray*}
The term $E(H)^{2}$ is equal to
\begin{eqnarray*}\label{aadgh}
E(I^{2}_{\{\Delta_{i}XI_{\{|\Delta_{i}X|>\vartheta(h_{n})\}}> u\}}&+&\overline{F}(u)^{2}I^{2}_{\{|\Delta_{i}X|>\vartheta(h_{n})\}}\\&-&2\overline{F}(u)I_{\{\Delta_{i}XI_{\{|\Delta_{i}X|>\vartheta(h_{n})\}}> u\}}I_{\{|\Delta_{i}X|>\vartheta(h_{n})\}}).
\end{eqnarray*}
Due to $I^{2}=I$ and \\$I_{\{\Delta_{i}XI_{\{|\Delta_{i}X|>\vartheta(h_{n})\}}> u\}}I_{\{|\Delta_{i}X|>\vartheta(h_{n})\}}=I_{\{\Delta_{i}XI_{\{|\Delta_{i}X|>\vartheta(h_{n})\}}> u\}}$, $E(H)^{2}$ is equal to
\begin{eqnarray*}\label{ajdgh}
E(I_{\{\Delta_{i}XI_{\{|\Delta_{i}X|>\vartheta(h_{n})\}}> u\}}&+&\overline{F}(u)^{2}I_{\{|\Delta_{i}X|>\vartheta(h_{n})\}}\\&-&2\overline{F}(u)I_{\{\Delta_{i}XI_{\{|\Delta_{i}X|>\vartheta(h_{n})\}}> u\}}).
\end{eqnarray*}
By \eqref{aadgh} and \eqref{aadgbh},
\begin{eqnarray*}\label{ajdgh}
\lim_{n\rightarrow\infty}Var[J]&=&\lim_{n\rightarrow\infty}\frac{n}{T_{n}}[\lambda h_{n}\overline{F}(u)(1-\overline{F}(u))+o(h_{n})]
\\&=&\lambda \overline{F}(u)(1-\overline{F}(u)).
\end{eqnarray*}
Applying the central limit theorem, we have
$$
J\xrightarrow{D}\mathbb{N}(0,\lambda \overline{F}(u)(1-\overline{F}(u)))
$$
as  $n\rightarrow\infty$

From the Lemma ~\ref{cite}, we have $G\xrightarrow{P}\lambda$ and then by the Slutsky's theorem, we have
$$\frac{J}{G}\rightarrow^{D}\mathbb{N}(0,\frac{\overline{F}(u)(1-\overline{F}(u))}{\lambda})$$
\subsection{Study of the estimatiors in section \ref{12}}
As is well known in the inference for discretely observed diffusions that:
$$
\widetilde{\sigma_{n}^{2}}\xrightarrow{P}\sigma^{2},\quad n\rightarrow\infty.
$$
In addition, if $nh_{n}^{2}\rightarrow0$,
$$\sqrt{n}(\widetilde{\sigma^{2}_{n}}-\sigma^{2})\xrightarrow{D}N(0,2\sigma^{4})$$
as $n\rightarrow\infty$.
The proof is from Theorem 3.1 in \cite{Shimizu2009}.

In order to find the asymptotic nomality of  $\widetilde{\lambda_{n}}$, $\widetilde{\rho_{n}}$  and $\widetilde{l_{F_n}}$,  We will first define a truncation function $\varphi_{n}(x)$ satisfying the following conditions:

1. $|\varphi_{n}(x)|\leq\beta_{n}$ $a.e.$ for a real-valued sequence $\beta_{n}$ such that $\beta_{n}\uparrow\infty$ as $n\rightarrow\infty$.

2. $\varphi_{n}(x)\rightarrow x$ $a.e.$ as $n\rightarrow\infty$.

Through this $\varphi_n$ we will redefine the estimator of $\lambda$, $\rho$ and $l_F$. Moreover, we consider the estimator of the product of $\lambda l_F$. They are defined as
\begin{eqnarray*}
\widetilde{\rho_{n}}^{*}&=&\frac{1}{c}\frac{\sum_{i=1}^{n}(\varphi_{n}\circ M_{s}^{1})(|\Delta_{i}X|) I_{\{|\Delta_{i}X|>\vartheta(h_{n})\}}}{T_{n}},
\end{eqnarray*}
\begin{eqnarray*}
\widetilde{\lambda}\widetilde{l_{F}}_{n}^{*}(s)&=&\frac{\sum_{i=1}^{n}(\varphi_{n}\circ  M_{s}^{2})(|\Delta_{i}X|) I_{\{|\Delta_{i}X|>\vartheta(h_{n})\}}}{T_{n}},
\end{eqnarray*}
where $M_{s}^{1}(x)=x, M_{s}^{2}(x)=e^{-sx}$,
\begin{eqnarray*}
 \varphi_{n}\circ M_{s}(x)= \left\{\begin{array}{l}
M_{s}(x)  \quad if\left(M_{s}(x)\bigvee\sup_{s\in\mathbb{E}}(M_{s}(x))'_{s}\bigvee\sup_{s\in\mathbb{E}}(M_{s}(x))'_{x}\right)\leq \kappa_{n},\\
 0\quad \quad \quad Otherwise,
\end{array}\right\}
\end{eqnarray*}
and $\kappa_{n}>0$ is a real-value sequence, we have the following results:
\begin{lem}\label{cite}
Supposes $nh_{n}\rightarrow\infty$, $h_{n}\rightarrow0$ as $n\rightarrow\infty$,
then
\begin{equation}\label{conv lambda}
\widetilde{\lambda_{n}}\xrightarrow{P} \lambda,\quad \widetilde{\rho_{n}}^{*}\xrightarrow{P} \rho,
\end{equation}
and
\begin{equation}\label{conv lambda F}
\sup_{\{s|s\in \mathbb{E}\}}|\widetilde{\lambda_{n}}\widetilde{l_{F}}_{n}^{*}(s)-\lambda l_{F}(s)|\xrightarrow{P} 0.
\end{equation}
In addition, if the Condition $\mathbf{B}$ in section \ref{GN} is satisfied, $\int_{0}^{\infty}x^{2}F(dx)<\infty$ and $nh_{n}^{1+\beta}\rightarrow0$ for some $\beta\in(0,1)$, we have
\begin{equation}\label{asym lambda}
\sqrt{T_{n}}(\widetilde{\lambda_{n}}-\lambda)\xrightarrow{D}N(0,\lambda),
\end{equation}
\begin{equation}\label{asym rho}
\sqrt{T_{n}}(\widetilde{\rho_{n}}^{*}-\rho)\xrightarrow{D}N(0,\frac{\lambda}{c^{2}}\int_{0}^{\infty}x^{2}F(dx))
\end{equation}
and
\begin{equation}\label{asym lambda F}
\sqrt{T_{n}}\left(\widetilde{\lambda_{n}}\widetilde{l_{F}}_{n}^{*}(s)-\lambda l_{F}(s)\right)\xrightarrow{D}N(0,\lambda\int_{0}^{\infty}e^{-2sx}F(dx)),
\end{equation}
as $n\rightarrow\infty$.
\end{lem}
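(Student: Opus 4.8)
The plan starts from the observation that all three statistics have the common form
$\widehat{\Theta}_{n}(M):=T_{n}^{-1}\sum_{i=1}^{n}(\varphi_{n}\circ M)(|\Delta_{i}X|)\,I_{\{|\Delta_{i}X|>\vartheta(h_{n})\}}$:
one has $\widetilde{\lambda_{n}}=\widehat{\Theta}_{n}(1)$ (trivial truncation), $\widetilde{\rho_{n}}^{*}=c^{-1}\widehat{\Theta}_{n}(M_{s}^{1})$ and $\widetilde{\lambda}\widetilde{l_{F}}_{n}^{*}(s)=\widehat{\Theta}_{n}(M_{s}^{2})$, with target $\lambda\int_{0}^{\infty}M\,dF$ in each case (equal to $\lambda$, to $c\rho=\lambda\mu$, and to $\lambda l_{F}(s)$ respectively). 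So it suffices to analyse $\widehat{\Theta}_{n}(M)$ once and then specialise $M$. I would introduce the \emph{oracle} statistic $\Theta_{n}^{\circ}(M):=T_{n}^{-1}\sum_{i=1}^{n}(\varphi_{n}\circ M)(\gamma_{\tau^{(i)}})\,I_{\{\Delta_{i}N\geq 1\}}$, built from the true (unobservable) jumps $\gamma_{\tau^{(i)}}$, show $\widehat{\Theta}_{n}(M)$ is close to $\Theta_{n}^{\circ}(M)$, and run the limit theorems on $\Theta_{n}^{\circ}(M)$, whose summands form a row-wise independent triangular array.

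First I would reduce the threshold filter $\{|\Delta_{i}X|>\vartheta(h_{n})\}$ to the true jump indicator $\{\Delta_{i}N\geq 1\}$, in the spirit of Proposition~\ref{ccite} and the threshold estimates of \cite{Mancini(2004), Shimizu and Yoshida(2006)}. On $\{\Delta_{i}N=0\}$, $\Delta_{i}X=ch_{n}+\sigma\int_{t_{i-1}^{n}}^{t_{i}^{n}}dW_{s}$, so (using $ch_{n}=o(h_{n}^{\omega})$, the Gaussian tail bound, $\sigma<Q$ from Condition~$\mathbf{A}$ and $\omega<1/2$) the event $\{|\Delta_{i}X|>Lh_{n}^{\omega}\}$ has probability at most $2\exp(-L^{2}h_{n}^{2\omega-1}/(8Q^{2}))=o(h_{n})$; on $\{\Delta_{i}N=1\}$, $\bigl|\,|\Delta_{i}X|-\gamma_{\tau^{(i)}}\,\bigr|\leq ch_{n}+|\sigma\int dW_{s}|=O_{P}(h_{n}^{1/2})$ and, since $\gamma_{\tau^{(i)}}\geq\Gamma>0$, the threshold is crossed off an $o(h_{n})$ event; finally $P(\Delta_{i}N\geq 2)=O(h_{n}^{2})$. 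Propagating these estimates through the definitions of $\alpha_{n}^{\delta}(k_{n})$ and $\omega_{n}(s,p,q)$ yields $\widehat{\Theta}_{n}(M)-\Theta_{n}^{\circ}(M)=o_{P}(1)$ in general, and $=o_{P}(T_{n}^{-1/2})$ once Condition~$\mathbf{B}$ (which forces $\sqrt{T_{n}}(\alpha_{n}^{\delta}(k_{n})+\omega_{n}+\Gamma(s,\varphi_{n}))\to0$) and $nh_{n}^{1+\beta}\to0$ hold; in the same step the truncation $\varphi_{n}\circ M$ replaces $M$ at the cost $\lambda\int(M_{s}-\varphi_{n}\circ M_{s})\,dF=\Gamma(s,\varphi_{n})$, absorbed by the same condition.

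For \eqref{conv lambda}--\eqref{conv lambda F} it then remains to prove $\Theta_{n}^{\circ}(M)\xrightarrow{P}\lambda\int M\,dF$. Using $P(\Delta_{i}N=1)=\lambda h_{n}e^{-\lambda h_{n}}$, $P(\Delta_{i}N\geq2)=O(h_{n}^{2})$, the conditional law $\gamma_{\tau^{(i)}}\sim F$ (independent of $W$) given a single jump, and $I^{2}=I$, one gets $\Es[(\varphi_{n}\circ M)(\gamma_{\tau^{(i)}})I_{\{\Delta_{i}N\geq1\}}]=\lambda h_{n}\int\varphi_{n}\circ M\,dF+O(h_{n}^{2})$, which tends to $\lambda h_{n}\int M\,dF$, and a variance of order $\lambda h_{n}\int(\varphi_{n}\circ M)^{2}\,dF+O(h_{n}^{2})$; hence $\Es\Theta_{n}^{\circ}(M)\to\lambda\int M\,dF$, while $\var\Theta_{n}^{\circ}(M)=O(T_{n}^{-1})\to0$ because $T_{n}\to\infty$ and $\int(\varphi_{n}\circ M)^{2}\,dF$ is controlled (bounded for $M\equiv1$ and $M_{s}^{2}$; of order $\kappa_{n}\mu=o(T_{n})$ for $M_{s}^{1}$, using $\mu<\infty$ and the standing growth requirement on $\kappa_{n}$), so Chebyshev gives convergence in probability; the $\sup_{s\in\mathbb{E}}$ in \eqref{conv lambda F} follows from an $\varepsilon$-net over the compact $\mathbb{E}$ since $s\mapsto e^{-sx}$ is Lipschitz in $s$ uniformly in $x\geq0$. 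For the central limit theorems \eqref{asym lambda}--\eqref{asym lambda F}, now also assuming $\int x^{2}F(dx)<\infty$ and $nh_{n}^{1+\beta}\to0$, I would write $\sqrt{T_{n}}(\widehat{\Theta}_{n}(M)-\lambda\int M\,dF)$ as the sum of the centred term $\sqrt{T_{n}}(\Theta_{n}^{\circ}(M)-\Es\Theta_{n}^{\circ}(M))$, the deterministic remainder $\sqrt{T_{n}}(\Es\Theta_{n}^{\circ}(M)-\lambda\int M\,dF)=O(\sqrt{nh_{n}^{3}})\to0$ (by $nh_{n}^{1+\beta}\to0$), and $\sqrt{T_{n}}(\widehat{\Theta}_{n}(M)-\Theta_{n}^{\circ}(M))=o_{P}(1)$ from the previous paragraph, then apply the Lindeberg--Feller theorem to the first (normalised, row-i.i.d., mean-zero) term: its variance tends to $\lambda\int M^{2}\,dF$ by the computation above, and the Lindeberg condition holds since the jump contributions are bounded after the $\kappa_{n}$-truncation (outright for $M\equiv1$ and $M_{s}^{2}$) while $\{\Delta_{i}N\geq2\}$ and misclassified increments are negligible after normalisation. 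Specialising $M$ gives the variances $\lambda$, $\frac{\lambda}{c^{2}}\int x^{2}F(dx)$, $\lambda\int e^{-2sx}F(dx)$.

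I expect the main obstacle to be the sharp, $o(T_{n}^{-1/2})$-level bookkeeping in the threshold reduction and the bias-control step: one must simultaneously and \emph{uniformly in} $s\in\mathbb{E}$ control the Brownian-misclassification mass, the $ch_{n}$-drift perturbation of $|\Delta_{i}X|$ around $\gamma_{\tau^{(i)}}$, and the truncation bias $\lambda\int(M_{s}-\varphi_{n}\circ M_{s})\,dF$ --- precisely what $\alpha_{n}^{\delta}$, $\omega_{n}(s,p,q)$ and $\Gamma(s,\varphi_{n})$ in Condition~$\mathbf{B}$ are engineered to absorb, with $nh_{n}^{1+\beta}\to0$ disposing of the residual $O(h_{n})$ Poisson/drift bias. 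Everything else reduces to a standard triangular-array LLN/CLT, and the overall architecture parallels the estimator analysis of \cite{Shimizu and Yoshida(2006)} and Theorem~3.1 of \cite{Shimizu2009}.
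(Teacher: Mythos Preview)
Your proposal is correct and complete in outline, but it takes a somewhat different route from the paper's own proof. The paper does not introduce an oracle statistic built from the true jumps; instead it works directly with the observed increments, writing
\[
\sqrt{T_{n}}\bigl(\nu_{n}(s)-\nu(s)\bigr)=\sum_{i=1}^{n}Y_{n,i},\qquad
Y_{n,i}=\frac{1}{\sqrt{T_{n}}}\Bigl((\varphi_{n}\circ M_{s})(|\Delta_{i}X|)\,I_{\{|\Delta_{i}X|>\vartheta(h_{n})\}}-h_{n}\lambda\!\int_{0}^{\infty}M_{s}(x)\,F(dx)\Bigr),
\]
and then verifies the three moment conditions $\sum_{i}|E[Y_{n,i}]|\to0$, $\sum_{i}E[Y_{n,i}^{2}]\to\lambda\int M_{s}^{2}\,dF$, $\sum_{i}E[Y_{n,i}^{4}]\to0$ for a triangular-array CLT (as in \cite{Shiryaev}), with the computations deferred to \cite{Shimizu2009b}; the consistency statements \eqref{conv lambda}--\eqref{conv lambda F} are simply quoted from Theorems~3.2 and~3.4 of \cite{Shimizu2009}. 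Your approach instead splits off an intermediate oracle $\Theta_{n}^{\circ}(M)$, bounds $\widehat{\Theta}_{n}(M)-\Theta_{n}^{\circ}(M)$ via the threshold/Gaussian-tail estimates, and runs Lindeberg--Feller on the oracle. Both are valid: the paper's version is shorter because it outsources the hard inequalities to existing references, whereas your oracle decomposition is more self-contained and cleanly separates the threshold-misclassification error (absorbed by Condition~$\mathbf{B}$ and $nh_{n}^{1+\beta}\to0$) from the genuine sampling fluctuation; it also makes the uniform-in-$s$ argument for \eqref{conv lambda F} explicit via an $\varepsilon$-net, which the paper leaves inside the citation.
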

\begin{proof}
First from the theorem 3.2 and 3.4 in \cite{Shimizu2009}, it is easy to get \eqref{conv lambda} and \eqref{conv lambda F}. So we only need to check \eqref{asym lambda},\eqref{asym rho} and \eqref{asym lambda F}. We define
\begin{equation*}\label{nu}
\nu_n(s)=\frac{\sum_{i=1}^n\varphi_n \circ M_s(|\Delta_iX|)I_{\{|\Delta_iX|>\vartheta(h_n)\}}}{T_n}
\end{equation*}
and
\begin{equation*}\label{nu s}
\nu(s)=\lambda \int_0^{\infty}M_s(x)F(dx)
\end{equation*}
Notice that
\begin{equation*}
\sqrt{T_n}(\nu_n(s)-\nu(s))=\sum_{i=1}^nY_{n,i}
\end{equation*}
where
\begin{equation}
Y_{n,i}=\frac{1}{\sqrt{T_n}}\left(\varphi_n\circ M_s(|\Delta_iX|)I_{\{|\Delta_iX|\vartheta(h_n)\}}-h_n\lambda\int_0^{\infty}M_s(x)F(dx)\right)
\end{equation}
we can apply the central limit theorem for $\{Y_{n,i}\}_{1\leq i\leq n}$. If the following conditions \eqref{con CLM 1}-\eqref{con CLM 3} are satisfied, we can obtain \eqref{asym lambda}-\eqref{asym lambda F} by \cite{Shiryaev}
\begin{equation}\label{con CLM 1}
\sum_{i=1}^n|E[Y_{n,i}]|\xrightarrow{P}0,
\end{equation}
\begin{equation}\label{con CLM 2}
\sum_{i=1}^nE[Y_{n,i}]^2\xrightarrow{P}\lambda \int_0^{\infty}M_s^2F(dx),
\end{equation}
\begin{equation}\label{con CLM 3}
\sum_{i=1}^nE[Y_{n,i}]^4\xrightarrow{P}0
\end{equation}
The details of the proofs of \eqref{con CLM 1}-\eqref{con CLM 3} may refer to Theorem \cite{Shimizu2009b}.
\end{proof}

\section{Integrated Squared Error (ISE) of the Esitimator}\label{mMain}
\subsection{Weak consistency in the $\mathbf{ISE}$ sense}
We will present our important result that states a convergence in probability of the $ISE$ of the estimator.

First let us recall that let $\tau(x)$ be the time of ruin with the initial reserve $x$: $\tau(x)=\inf\{t>0,X_{t}\leq0\}$. The survival probability $\Phi(x)$ is defined as follows:
\begin{eqnarray*}
\Phi(x)&=&P\{\tau(x)=\infty\}.
\end{eqnarray*}
As we know,   a general expression for $\Phi(x)$ does not exist, but the corresponding Laplace
transform of $\Phi(x)$ can be obtained  by \cite{Shimizu}, there we take $\omega\equiv1$ and $\delta=0$ which is the Laplace transform of our model:
\begin{eqnarray}\label{lp}
L_{\Phi}(s)&=&\int_{0}^{\infty}e^{-sx}\Phi(x)dx
\nonumber\\&=&\frac{1-\frac{\lambda\mu}{c}}{s+\frac{\sigma^{2}}{2c}s^{2}-\frac{\lambda}{c}(1-l_{F}(s))}
\end{eqnarray}
with the estimators of the parameters in \eqref{lp}, we define $\widetilde{L_{\Phi}(s)}$ as the estimator of $L_{\Phi}(s)$:
\begin{eqnarray}\label{lpe}
\widetilde{L_{\Phi}(s)}&=&\frac{1-\widetilde{\rho_{n}}^{*}}{s+\frac{\widetilde{\sigma_{n}^{2}}}{2c}s^{2}-\frac{\widetilde{\lambda}_{n}}{c}(1-\widetilde{l_{F}}_{n}^{*}(s))}.
\end{eqnarray}
In order to estimate the original functions $\Phi(x)$, we will apply a regularized inversion of the Laplace transform proposed by \cite{Chauveau} to \eqref{lpe}. The regularized inversion is defined as follows, which is available for any $L^{2}$ functions.
\begin{defn} \label{4}
Let $m>0$ be a constant. The regularized Laplace inversion $L_{m}^{-1}:L^{2}\rightarrow L^{2}$ is given by
$$L_{m}^{-1}g(t)=\frac{1}{\pi^{2}}\int_{0}^{\infty}\int_{0}^{\infty}\Psi_{m}(y)y^{-\frac{1}{2}}e^{-tvy}g(v)dvdy$$
for a function $g\in L^{2}$ and $t\in(0,\infty)$, where
$$\Psi_{m}(y)=\int_{0}^{a_{m}}cosh(\pi x)cos(x\log y)dx$$
and $a_{m}=\pi^{-1}cosh^{-1}(\pi m)>0$.
\end{defn}

\begin{remark}\label{remark2}
It is well known that the norm $L^{-1}$ is generally unbounded, which causes the ill-posedness of the Laplace inversion. However, $L_{m}^{-1}$ is bounded for each $m>0$, in particular,
$$\|L_{m}^{-1}\|\leq m\|L\|=\sqrt{\pi}m$$
see \cite{Chauveau}, equation $(3.7)$.
\end{remark}

By \eqref{lp} and \eqref{lpe}, it is obvious that $L_{\Phi}(s) \notin L^{2}(0,\infty)$ and $\widetilde{L_{\Phi}}(s) \notin L^{2}(0,\infty)$. The regularized laplace transform inversion $L_{m}^{-1}$ of Definition ~\ref{4} does not apply at once. In order to ensure they are in $L^{2}(0,\infty)$, these functions $L_{\Phi}(s)$ and $\widetilde{L_{\Phi}}(s)$ will be slightly modified.

For arbitrary fixed $\theta>0$, we define $\Phi_{\theta}(x)=e^{-\theta x}\Phi(x)$.
It is obvious that
$$L_{\Phi_{\theta}}(s)=L_{\Phi}(s+\theta),\quad\widetilde{L_{\Phi_{\theta}}(s)}=\widetilde{L_{\Phi}(s+\theta)}.$$

Let us define an estimaor of $\Phi(x)$: for given numbers $\theta>0$ and $m(n)$, we denote by
\begin{eqnarray}\label{gx}
\widetilde{\Phi_{n}}(x)&=&e^{\theta x}\widetilde{\Phi_{\theta,m(n)}}(x),
\end{eqnarray}
where $\widetilde{\Phi_{\theta,m(n)}}(x)=L_{m(n)}^{-1}\widetilde{L_{\Phi_{\theta}}}(s)$.

Now we shall present our important result which states a convergence in probability of the $ISE$ on compacts.
\begin{thm}\label{main}
Suppose the same assumption as in Lemma ~\ref{cite} and the net profit condition: $c>\lambda\mu$. Moreover, suppose that there exist a constant $K>0$ such that $0\leq \Phi'(x)=g(x)\leq K<\infty$. Then, for numbers $m(n)$ such that $m(n)=\sqrt{\frac{T_{n}}{\log T_{n}}}$ as $n\rightarrow\infty$ and for any constant $B>0$, we have
$$\|\widetilde{\Phi}_{m(n)}-\Phi\|^{2}_{B}=O_{P}((\log T_{n})^{-1})\quad(n\rightarrow\infty).$$
\end{thm}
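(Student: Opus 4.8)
The plan is to decompose the error $\|\widetilde{\Phi}_{m(n)}-\Phi\|_{B}$ into a stochastic (estimation) part and a deterministic (regularization bias) part, and control each at the rate $(\log T_{n})^{-1/2}$. Write $\Phi_{\theta}(x)=e^{-\theta x}\Phi(x)$ and recall $\widetilde{\Phi}_{\theta,m(n)}=L_{m(n)}^{-1}\widetilde{L_{\Phi_{\theta}}}$. Since multiplication by $e^{\theta x}$ on a compact $[0,B]$ only costs a constant factor $e^{2\theta B}$ in the squared norm, it suffices to bound $\|\widetilde{\Phi}_{\theta,m(n)}-\Phi_{\theta}\|_{B}^{2}$. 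Insert the intermediate term $L_{m(n)}^{-1}L_{\Phi_{\theta}}$ and use the triangle inequality:
\begin{equation*}
\|\widetilde{\Phi}_{\theta,m(n)}-\Phi_{\theta}\|_{B}\leq \|L_{m(n)}^{-1}(\widetilde{L_{\Phi_{\theta}}}-L_{\Phi_{\theta}})\|_{B}+\|L_{m(n)}^{-1}L_{\Phi_{\theta}}-\Phi_{\theta}\|_{B}.
\end{equation*}
For the first (stochastic) term I would use the operator-norm bound from Remark \ref{remark2}, $\|L_{m(n)}^{-1}\|\leq\sqrt{\pi}\,m(n)$, to get $\|L_{m(n)}^{-1}(\widetilde{L_{\Phi_{\theta}}}-L_{\Phi_{\theta}})\|_{B}\leq\sqrt{\pi}\,m(n)\,\|\widetilde{L_{\Phi_{\theta}}}-L_{\Phi_{\theta}}\|$, so everything reduces to showing $\|\widetilde{L_{\Phi_{\theta}}}-L_{\Phi_{\theta}}\|=O_{P}(T_{n}^{-1/2})$ in the relevant $L^{2}$-norm; then with $m(n)=\sqrt{T_{n}/\log T_{n}}$ the product is $O_{P}((\log T_{n})^{-1/2})$, which squares to the claimed rate.

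The key intermediate step is therefore the rate $\|\widetilde{L_{\Phi_{\theta}}}-L_{\Phi_{\theta}}\|=O_{P}(T_{n}^{-1/2})$. I would prove this by expressing $L_{\Phi}(s)$ through \eqref{lp} as a smooth function of the four quantities $\rho$, $\sigma^{2}$, $\lambda$, $l_{F}(s)$ (equivalently of $\lambda l_F(s)$), and $\widetilde{L_{\Phi}}(s)$ as the same function of their estimators $\widetilde{\rho_{n}}^{*}$, $\widetilde{\sigma_{n}^{2}}$, $\widetilde{\lambda}_{n}$, $\widetilde{\lambda}_n\widetilde{l_{F}}_{n}^{*}(s)$. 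Under the net profit condition $c>\lambda\mu$ the denominator $s+\tfrac{\sigma^{2}}{2c}s^{2}-\tfrac{\lambda}{c}(1-l_{F}(s))$ is bounded away from $0$ for $s$ in the relevant half-line (a Cramér-type argument), so the map is Lipschitz in a neighbourhood of the true parameter, uniformly in $s\ge\theta$ after the $\theta$-shift and with enough decay in $s$ to make the $L^{2}(0,\infty)$ norm finite. Combining this Lipschitz bound with the $\sqrt{T_n}$-consistency of all four estimators — $\widetilde{\sigma_n^2}$ from Theorem 3.1 of \cite{Shimizu2009} and \eqref{asym lambda}, \eqref{asym rho}, \eqref{asym lambda F} of Lemma \ref{cite}, all of which hold under Condition $\mathbf{B}$ and $nh_n^{1+\beta}\to0$ — yields the $O_{P}(T_{n}^{-1/2})$ rate, with the $e^{-\theta s}$ weighting ensuring membership in $L^{2}(0,\infty)$ as noted just before \eqref{gx}.

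For the second (deterministic bias) term, $\|L_{m(n)}^{-1}L_{\Phi_{\theta}}-\Phi_{\theta}\|_{B}$, I would invoke the approximation property of the regularized Laplace inversion of \cite{Chauveau}: since $L_{m}^{-1}L\to \mathrm{Id}$ strongly as $m\to\infty$ and the rate depends on the smoothness of the target, the hypothesis that $\Phi'=g$ exists and is bounded by $K$ gives a quantitative bias estimate of the form $\|L_{m}^{-1}L_{\Phi_{\theta}}-\Phi_{\theta}\|_{B}=O(m^{-1})$ (or at worst $o(1)$ fast enough); with $m(n)=\sqrt{T_{n}/\log T_{n}}$ this is negligible compared with $(\log T_{n})^{-1/2}$. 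Squaring the sum of the two bounds and absorbing the $e^{2\theta B}$ constant gives $\|\widetilde{\Phi}_{m(n)}-\Phi\|_{B}^{2}=O_{P}((\log T_{n})^{-1})$.

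The main obstacle I anticipate is the uniform-in-$s$ Lipschitz/stability estimate for the Laplace-transform map under the net profit condition: one must show the denominator in \eqref{lp} stays uniformly bounded below on the shifted domain and that the estimation errors propagate with the right decay in $s$ so that the $L^{2}(0,\infty)$ norm — not just a pointwise bound — is $O_{P}(T_n^{-1/2})$. Handling the product term $\widetilde{\lambda}_n\widetilde{l_F}_n^{*}(s)$ versus $\lambda l_F(s)$ uniformly over $s\in\mathbb{E}$ via \eqref{conv lambda F} and \eqref{asym lambda F}, and patching the finite-interval control on $\mathbb{E}$ to the tail decay supplied by $e^{-\theta s}$, is the delicate bookkeeping step. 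Everything else is a routine combination of Slutsky-type arguments with the cited results.
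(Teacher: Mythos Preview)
Your overall decomposition into a stochastic estimation term and a deterministic regularization bias, and your control of the former via $\|L_{m(n)}^{-1}\|\le\sqrt{\pi}\,m(n)$ together with $\|\widetilde{L_{\Phi_\theta}}-L_{\Phi_\theta}\|=O_P(T_n^{-1/2})$, is exactly what the paper does. For the stochastic piece the paper carries out the $O_P(T_n^{-1/2})$ bound by explicit algebra rather than an abstract Lipschitz argument: the net profit condition gives $D(s)\ge s(1-\rho)$ (because $1-l_F(s)\le s\mu$), which produces integrable decay of order $(s+\theta)^{-2}$ and $(s+\theta)^{-4}$ in the relevant differences; the $s$-integral is then split at $T_n$ to treat the tail. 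Your outline captures this, and the ``delicate bookkeeping'' you anticipate is precisely this computation.

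The genuine gap is in the bias term. The rate you assert, $\|L_m^{-1}L_{\Phi_\theta}-\Phi_\theta\|=O(m^{-1})$, is not available for regularized Laplace inversion, which is severely ill-posed; the correct result from \cite{Chauveau} (recorded in the paper as Lemma~\ref{lem2}) gives only
\[
\|L_m^{-1}L_f-f\|=O\bigl((\log m)^{-1/2}\bigr),
\]
under the smoothness hypothesis $\int_0^\infty [t(\sqrt{t}\,f(t))']^2\,t^{-1}\,dt<\infty$. The assumption $0\le\Phi'(x)\le K$ is used exactly to verify this integral condition for $f=\Phi_\theta$ (the paper does this by a short direct calculation), not to obtain a polynomial rate. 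Hence the bias is \emph{not} negligible: with $m(n)=\sqrt{T_n/\log T_n}$ one has $\log m(n)\sim\tfrac12\log T_n$, so the squared bias is of order $(\log T_n)^{-1}$, identical to the stochastic contribution $m(n)^2/T_n=(\log T_n)^{-1}$. The specific choice of $m(n)$ is the balancing of these two equal-order terms, not a regime where one dominates. Your final rate happens to be correct, but only because the stochastic term alone already has that order; as written, the argument would stall when you attempt to justify the $O(m^{-1})$ bias claim, and it misidentifies why $m(n)$ is chosen as it is.
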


\subsection{The proof of Theorem~\ref{main}}\label{03}
To prove Theorem~\ref{main}, we need the following lemma:

The Lemma ~\ref{lem2}, which is essentially obtained by the proof of Theorem 3.2 in \cite{Chauveau}, shows that $L_{n}^{-1}$ can be a Laplace inversion asymptotically in $ISE$ sense.
\begin{lem}\label{lem2}
Suppose that, for a function $f\in L^{2}$ with the derivative $f'$,
$\int_{0}^{\infty}[t(t^{\frac{1}{2}}f(t))']^{2}t^{-1}dt<\infty$, then
$$\|L_{n} ^{-1}L_{f}-f\|=O\left((\log n)^{-\frac{1}{2}}\right)\quad (n\rightarrow\infty).$$
\end{lem}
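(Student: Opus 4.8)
The plan is to pass to the Mellin--Plancherel representation, in which both the Laplace operator $L$ and its regularized inverse $L_{n}^{-1}$ become diagonal, and then to read off the rate from a single tail estimate. First I would introduce the change of variables $t=e^{x}$: to $f\in L^{2}(0,\infty)$ associate $w(x)=e^{x/2}f(e^{x})$, $x\in\mathbb{R}$. A direct substitution gives $\int_{\mathbb{R}}|w(x)|^{2}\,dx=\int_{0}^{\infty}|f(t)|^{2}\,dt$, so $f\mapsto w$ is an isometry of $L^{2}(0,\infty)$ onto $L^{2}(\mathbb{R})$, and composing with the Fourier transform yields the Mellin--Plancherel isometry $f\mapsto\hat{w}$. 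The point of this reduction is that the hypothesis is exactly a smoothness statement about $w$: writing $h(t)=t^{1/2}f(t)$ one has $w(x)=h(e^{x})$, hence $w'(x)=[t(t^{1/2}f(t))']|_{t=e^{x}}$, and the substitution $t=e^{x}$ turns the assumed integral into $\int_{\mathbb{R}}|w'(x)|^{2}\,dx=\int_{0}^{\infty}[t(t^{1/2}f(t))']^{2}t^{-1}\,dt<\infty$. Thus the hypothesis says precisely $w\in H^{1}(\mathbb{R})$, i.e. $\int_{\mathbb{R}}\omega^{2}|\hat{w}(\omega)|^{2}\,d\omega=\|w'\|_{L^{2}}^{2}<\infty$.

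The key step is to show that in this representation $L_{n}^{-1}L$ is the sharp frequency truncation onto $\{|\omega|\le a_{n}\}$. I would use that $L$ has symmetric kernel $e^{-st}$ and is therefore bounded and self-adjoint on $L^{2}(0,\infty)$, with generalized eigenfunctions $\phi_{\omega}(t)=t^{-1/2+i\omega}$. A Beta-type integral gives $(L^{2}\phi_{\omega})(s)=\int_{0}^{\infty}\frac{\phi_{\omega}(u)}{s+u}\,du=\frac{\pi}{\cosh(\pi\omega)}\phi_{\omega}(s)$, so $L^{2}$ is diagonalized by the Mellin transform with eigenvalue $\pi/\cosh(\pi\omega)$. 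Writing $L_{n}^{-1}L_{f}(t)=\frac{1}{\pi^{2}}\int_{0}^{\infty}\Psi_{n}(y)y^{-1/2}(L^{2}f)(ty)\,dy$ and evaluating on $f=\phi_{\omega}$, the inner integral $\int_{0}^{\infty}\Psi_{n}(y)y^{-1+i\omega}\,dy$ reduces, after the substitution $y=e^{\xi}$ and interchange of the $x$- and $\xi$-integrals defining $\Psi_{n}$, to $\pi\cosh(\pi\omega)\,\mathbf{1}_{\{|\omega|<a_{n}\}}$, the cutoff occurring exactly where $\cosh(\pi\omega)$ crosses $\pi n$. Hence $L_{n}^{-1}L\phi_{\omega}=\mathbf{1}_{\{|\omega|<a_{n}\}}\phi_{\omega}$, i.e. $L_{n}^{-1}L$ is unitarily equivalent to multiplication by $\mathbf{1}_{[-a_{n},a_{n}]}$ acting on $\hat{w}$ (equivalently, the spectral projection of $L^{2}$ onto eigenvalues exceeding $1/n$).

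Granting this diagonalization, the conclusion is immediate from Plancherel: $\|L_{n}^{-1}L_{f}-f\|^{2}=\int_{|\omega|>a_{n}}|\hat{w}(\omega)|^{2}\,d\omega\le a_{n}^{-2}\int_{\mathbb{R}}\omega^{2}|\hat{w}(\omega)|^{2}\,d\omega=a_{n}^{-2}\|w'\|_{L^{2}}^{2}$. Since $a_{n}=\pi^{-1}\cosh^{-1}(\pi n)\sim\pi^{-1}\log n$ as $n\to\infty$, this gives $\|L_{n}^{-1}L_{f}-f\|=O(a_{n}^{-1})=O((\log n)^{-1})$, which in particular yields the asserted $O((\log n)^{-1/2})$ bound. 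The main obstacle is the rigorous justification of the diagonalization: the eigenfunctions $\phi_{\omega}$ are not in $L^{2}$, so the formal computation with the $\delta$-kernel must be replaced by a genuine Mellin--Plancherel argument. The cleanest route is to compute the integral kernel $K_{n}(t,u)=\frac{1}{\pi^{2}}\int_{0}^{\infty}\Psi_{n}(y)\frac{y^{-1/2}}{ty+u}\,dy$ of $L_{n}^{-1}L$, to show under $t=e^{x},\,u=e^{x'}$ that it becomes a convolution on $L^{2}(\mathbb{R})$, and to identify the Fourier transform of its convolution kernel with $\mathbf{1}_{[-a_{n},a_{n}]}$. This is precisely the content borrowed from Theorem 3.2 of \cite{Chauveau}; the Fubini/Tonelli justifications for the interchanges and the convergence of the Beta-type and kernel integrals are the routine technical points.
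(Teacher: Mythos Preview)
Your proposal is correct. Note, however, that the paper does not give its own proof of this lemma: it simply records that the statement ``is essentially obtained by the proof of Theorem~3.2 in \cite{Chauveau}.'' What you have written is precisely a sketch of that argument from \cite{Chauveau}: the Mellin--Plancherel isometry $f\mapsto w$, $w(x)=e^{x/2}f(e^{x})$, the identification of the hypothesis with $w\in H^{1}(\mathbb{R})$, the diagonalization of $L^{2}$ with spectral multiplier $\pi/\cosh(\pi\omega)$, and the recognition of $L_{n}^{-1}L$ as the sharp Fourier cut-off at $|\omega|\le a_{n}$. So there is no genuine methodological difference to report; you have filled in what the paper leaves as a citation. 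Your tail estimate in fact yields $O((\log n)^{-1})$, which is stronger than the stated $O((\log n)^{-1/2})$; this is consistent with \cite{Chauveau} and the weaker exponent in the paper is harmless for the application in Theorem~\ref{main}, where only $\|\Phi_{\theta,m(n)}-\Phi_{\theta}\|^{2}=O((\log m(n))^{-1})$ is used.
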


Now we will prove the Theorem~\ref{main}.
\begin{proof}
By \eqref{gx}, we have
\begin{eqnarray*}
\|\widetilde{\Phi}_{m(n)}-\Phi\|^{2}_{B}&\leq&e^{2\theta B}\|\widetilde{\Phi}_{\theta,m(n)}-\Phi_{\theta}\|^{2}_{B}\\&\leq&2e^{2\theta B}\{\|L_{m(n)}^{-1}\widetilde{L_{\Phi_{\theta}}}-L_{m(n)}^{-1}L_{\Phi_{\theta}}\|^{2}+\|\Phi_{\theta,m(n)}-\Phi_{\theta}\|^{2}\}
\\&=&2e^{2\theta B}\left[\|I_{1}+I_{2}\|\right].
\end{eqnarray*}
In order to deal with $I_{2}$, let us write $\Phi'_{\theta}=g_{\theta}$ and note that
\begin{eqnarray*}
&&\int_{0}^{\infty}[x(\sqrt{x}\Phi_{\theta}(x))']^{2}\frac{1}{x}dx\\&=&\int_{0}^{\infty}[x(\frac{1}{2\sqrt{x}}\Phi_{\theta}(x)+x\sqrt{x}g_{\theta}(x))]^{2}\frac{1}{x}dx
\\&\leq&\int_{0}^{\infty}2\frac{1}{x}[x(\frac{1}{2\sqrt{x}}\Phi_{\theta}(x)]^{2}+\int_{0}^{\infty}2\frac{1}{x}[x\sqrt{x}g_{\theta}(x)]^{2}dx
\\&=&\int_{0}^{\infty}\frac{1}{2}\Phi_{\theta}^{2}(x)dx+2\int_{0}^{\infty}x^{2}g_{\theta}^{2}(x)dx
\\&\leq&\int_{0}^{\infty}\frac{1}{2}e^{-2\theta x}dx+2\int_{0}^{\infty}x^{2}[g(x)e^{-\theta x}-\theta \Phi(x)e^{-\theta x}]^{2}dx
\\&\leq&\frac{1}{4\theta}+4\int_{0}^{\infty}x^{2}g^{2}(x)e^{-2\theta x}dx+4\theta^{2}\int_{0}^{\infty}\Phi^{2}(x)x^{2}e^{-2\theta x}dx
\\&\leq&\frac{1}{4\theta}+4(K^{2}+\theta^{2})\int_{0}^{\infty}x^{2}e^{-2\theta x}dx
\\&<&+\infty.
\end{eqnarray*}
Therefore, by the Lemma ~\ref{lem2}, we may conclude that
\begin{eqnarray}\label{bx}
\|\Phi_{\theta,m(n)}-\Phi_{\theta}\|^{2}=O(\frac{1}{\log m(n)}).
\end{eqnarray}
Next, we consider the formula $I_{1}$. By \eqref{lp} and \eqref{lpe}, we have
\begin{eqnarray}\label{bc}
&&\|\widetilde{L_{\Phi_{\theta}}}-L_{\theta}\|^{2}
\nonumber\\&\leq&\int_{0}^{\infty}|\frac{1-\widetilde{\rho_{n}}^{*}}{\widetilde{D}(s+\theta)}-\frac{1-\rho}{D(s+\theta)}|^{2}ds
\nonumber\\&=&\int_{0}^{\infty}|\frac{1-\widetilde{\rho_{n}}^{*}}{\widetilde{D}(s+\theta)}-\frac{1-\rho}{\widetilde{D}(s+\theta)}+\frac{1-\rho}{\widetilde{D}(s+\theta)}-\frac{1-\rho}{D(s+\theta)}|^{2}ds
\nonumber\\&\leq&\int_{0}^{\infty}2\left(\frac{(1-\rho)(\widetilde{D}(s+\theta)-D(s+\theta))}{\widetilde{D}(s+\theta)D(s+\theta)}\right)^{2}
+2\left(\frac{(\widetilde{\rho_{n}}^{*}-\rho)}{\widetilde{D}(s+\theta)}\right)^{2}ds.
\nonumber\\&\leq&2\int_{0}^{\infty}\frac{(\widetilde{D}(s+\theta)-D(s+\theta))^{2}}{(s+\theta)^{4}(1-\widetilde{\rho_{n}}^{*})^{2}}ds
+2\int_{0}^{\infty}(\frac{\widetilde{\rho_{n}}*-\rho}{1-\widetilde{\rho_{n}}^{*}})^{2}\frac{1}{(s+\theta)^{2}}ds
\nonumber\\&=&2\int_{0}^{T_{n}}\frac{(\widetilde{D}(s+\theta)-D(s+\theta))^{2}}{(s+\theta)^{4}(1-\widetilde{\rho_{n}}^{*})^{2}}ds
\nonumber\\&&+2\int_{T_{n}}^{\infty}\frac{(\widetilde{D}(s+\theta)-D(s+\theta))^{2}}{(s+\theta)^{4}(1-\widetilde{\rho_{n}}^{*})^{2}}ds
\nonumber\\&&+2\int_{0}^{\infty}(\frac{\widetilde{\rho_{n}}^{*}-\rho}{1-\widetilde{\rho_{n}}^{*}})^{2}\frac{1}{(s+\theta)^{2}}ds
\end{eqnarray}
and
\begin{eqnarray}\label{hh}
&&\widetilde{D_{n}}(s+\theta)-D(s+\theta)
\nonumber\\&=&\frac{(s+\theta)^{2}}{2c}(\widetilde{\sigma_{n}}^{2}-\sigma^{2})+\frac{1}{c}\left((\lambda-\widetilde{\lambda_{n}})+(\widetilde{\lambda_{n}}\widetilde{l_{F}}_{n}^{*}(s+\theta)-\lambda l_{F}(s+\theta))\right)
\nonumber\\&=&O_{P}(T_{n}^{-\frac{1}{2}})+\frac{1}{c}(\widetilde{\lambda_{n}}\widetilde{l_{F}}_{n}^{*}(s+\theta)-\lambda l_{F}(s+\theta)).
\end{eqnarray}
Note that we used here the fact that $\widetilde{\sigma_{n}}^{2}-\sigma^{2}=o_{P}(T_{n}^{-\frac{1}{2}})$.

Now we consider the second term in \eqref{bc}. By \eqref{hh}, we have
\begin{eqnarray*}
&&\int_{T_{n}}^{\infty}\frac{(\widetilde{D}(s+\theta)-D(s+\theta))^{2}}{(s+\theta)^{4}(1-\widetilde{\rho_{n}}^{*})^{2}}ds
\\&\leq& O_{P}(T_{n}^{-1})\int_{T_{n}}^{\infty}\frac{1}{(s+\theta)^{4}}ds +\frac{2}{c^{2}}\int_{T_{n}}^{\infty}\frac{(\widetilde{\lambda_{n}}\widetilde{l_{F}}_{n}^{*}(s+\theta)-\lambda l_{F}(s+\theta))^{2}}{(s+\theta)^{4}(1-\widetilde{\rho_{n}}^{*})^{2}}ds
\\&\leq& O_{P}(T_{n}^{-1})\int_{T_{n}}^{\infty}\frac{1}{(s+\theta)^{4}}ds+\frac{4}{c^{2}}\int_{T_{n}}^{\infty}\frac{\widetilde{\lambda_{n}}^{2}+\lambda ^{2}}{(s+\theta)^{4}(1-\widetilde{\rho_{n}}^{*})^{2}}ds
\\&=& O_{P}(T_{n}^{-1})\int_{T_{n}}^{\infty}\frac{1}{(s+\theta)^{4}}ds+O_{P}(1)\int_{T_{n}}^{\infty}\frac{1}{(s+\theta)^{4}}ds
\\&=&O_{P}(1)\int_{T_{n}}^{\infty}\frac{1}{(s+\theta)^{4}}ds
\\&=&O_{P}(\frac{1}{T_{n}^{3}}).
\end{eqnarray*}
By Lemma ~\ref{cite}, the first term of \eqref{bc} is
\begin{eqnarray*}
2\int_{0}^{T}\frac{(\widetilde{D}(s+\theta)-D(s+\theta))^{2}}{(s+\theta)^{4}(1-\widetilde{\rho_{n}}^{*})^{2}}ds
&=& O_{P}(T_{n}^{-1})
\end{eqnarray*}
and the last term is
\begin{eqnarray*}
2\int_{0}^{\infty}(\frac{\widetilde{\rho_{n}}^{*}-\rho}{1-\widetilde{\rho_{n}}^{*}})^{2}\frac{1}{(s+\theta)^{2}}ds
&=& O_{P}(T_{n}^{-1}).
\end{eqnarray*}

Therefore,
\begin{eqnarray}\label{zh}
\|L_{m(n)}^{-1}\|^{2}\|\widetilde{L_{\Phi_{\theta}}}-L_{\Phi_{\theta}}\|^{2}&=&O_{p}(\frac{m^{2}(n)}{T_{n}}).
\end{eqnarray}
Combining \eqref{bx} and \eqref{zh}, we have
\begin{eqnarray}\label{jl}
\|\widetilde{\Phi}_{m(n)}-\Phi\|^{2}_{B}=O_{p}(\frac{m^{2}(n)}{T_{n}})+O_{p}(\frac{1}{\log m(n)}).
\end{eqnarray}
With an optimal $m(n)=\sqrt{\frac{T_{n}}{\log T_{n}}}$ balanceing the the right hand two terms in \eqref{jl}, the order becomes $O_{P}((\log T_{n})^{-1})$.
\end{proof}
\begin{remark}\label{remark3}
The explicit integral expression
\begin{eqnarray*}
\widetilde{\Phi}_{m(n)}(u)&=&\frac{e^{u\theta}}{\pi^{2}}\int_{0}^{\infty}\int_{0}^{\infty}e^{-usy}\widetilde{L_{\Phi_{\theta}}}(s)\Psi_{m(n)}(y)y^{-\frac{1}{2}}dsdy
\end{eqnarray*}
where $\Psi_{m(n)}(y)=\int_{0}^{a_{m(n)}}\cosh(\pi x)\cos(x\log(y))dx$ and $a_{m(n)}=\pi^{-1}\cosh^{-1}(\pi m(n))>0$ and $m(n)=\sqrt{\frac{T_{n}}{\log T_{n}}}$.
\end{remark}

\section{Goodness-of-fit Test for $\gamma_i$}\label{GoF}
In theorem \ref{aympto hat  bar F} we have proved the asymptotic properties of $\hat{F}(u)$, but as we can see the variance depends on the unknown parameter $\lambda$, when we want to do the problem of goodness-of-fit test for the distribution of $\gamma_i$, we want to find the estimator with the distribution free just with the null hypothese: $\mathcal{H_0}: F(x)=F_0^(x)$. To achieve this goal, we need the following convergence:

\begin{equation}\label{no lambda estimator}
\sqrt{\sum_{i=1}^nI_{\{|\Delta_iX|>\vartheta(h_n)\}}}\left(\hat{F}_n(u)-F(u)\right)\xrightarrow{D}\mathbb{N}(0,F(u)(1-F(u))
\end{equation}
The proof is the same as in theorem \ref{aympto hat  bar F}.  Now we construct the Cram\'er-von Mises $W_n^2$ and Kolmogorov-Smirnov $D_n$ statistics are
$$
W_n^2=n\int_{-\infty}^{\infty}\left[\hat{F}_n(x)-F_0(x)\right]^2dF_0(x),\,\,\,\, D_n=\sup_x\left|\hat{F}_n(x)-F_0(x)\right|
$$
From the equation \eqref{no lambda estimator}, we can easily get the limit behavior of these statistics
$$
W_n^2\xrightarrow{d} \int_0^1W_0(s)^2ds,\,\,\,\,\,\,\,\,\, \sqrt{n}D_n\xrightarrow{d}\sup_{0\leq s \leq 1}|W_0(s)|
$$
where $\{W_0(s),\, 0\leq s\leq 1\}$ is a Brownian bridge, the same as the i.i.d case.


\begin{thebibliography}{11}
\providecommand{\natexlab}[1]{#1} \providecommand{\url}[1]{\texttt{#1}} \providecommand{\urlprefix}{URL }
\expandafter\ifx\csname urlstyle\endcsname\relax
  \providecommand{\doi}[1]{doi:\discretionary{}{}{}#1}\else
  \providecommand{\doi}{doi:\discretionary{}{}{}\begingroup
  \urlstyle{rm}\Url}\fi
\providecommand{\eprint}[2][]{\url{#2}}

\bibitem{Shiryaev}
A. N. Shiryaev
\newblock{Probability}
\newblock \emph{Springer-Verlag},1984

\bibitem{Hipp(1989)}
 C.~Hipp.
 \newblock {Estimators and bootstrap confidence intervals for ruin probabilities}.
 \newblock \emph{Astin Bulletin}, 19\penalty0 \penalty0
   57--70, 1989.
 \bibitem{Mancini(2004)}
 C.~Mancini.
 \newblock {Estimation of the characteristics of the jump of a general Poisson-diffusion model}.
 \newblock \emph{Scandinavian Actuarial Journal}, 1\penalty0 \penalty0
   42--52, 2004.

\bibitem{Chauveau}
D. E.~Chauveau, A. C. M.~Vanrooij, and F. H. ~Ruymgaart.
 \newblock {Regularized inversion of noisy Laplace transforms}.
 \newblock \emph{Advances in Applied Mathematics}, 15\penalty0 (2):\penalty0 186-201, 1994.


 \bibitem{Frees(1986)}
 EW.~Frees.
 \newblock {Nonparametric estimation of the probability of ruin}.
 \newblock \emph{Astin Bulletin}, 16\penalty0 :\penalty0
   81--90, 1986.

 \bibitem{Dufresne and Gerber(1991)}
 F.~Dufresne and H.U.~Gerber.
 \newblock {Risk theory for the compound Poisson process that is perturbed by diffusion}.
 \newblock \emph{Insurance:Mathematics and Economics}, 10\penalty0 :\penalty0
   51--59, 1991.

 \bibitem{Furrer and Schimidli(1994)}
 H.J.~Furrer and H.~Schimidli.
 \newblock {Exponential inequalities for ruin probabilities of risk process perturbed by diffusion}.
 \newblock \emph{Insurance:Mathematics and Economics}, 15\penalty0 :\penalty0
   23--36, 1994.


  \bibitem{Gerber(1970)}
 H.U.~Gerber.
 \newblock {An extension of the renewal equation and its application in the collective theory of risk}.
 \newblock \emph{Scandinavian Actuarial Journal},  205--210, 1970.

\bibitem{Schimidli(1995)}
H.~Schimidli.
 \newblock {Cramer-Lundberg approximations for ruin probabilites of risk processes perturbed by diffusion}.
 \newblock \emph{Insurance:Mathematics and Economics},  16\penalty0 :\penalty0
  135--149, 1995.


\bibitem{Politis}
 K.~Politis.
 \newblock {Semiparametric estimation for non-ruin probabilities}.
 \newblock \emph{Scandinavian Actuarial Journal}, 2003\penalty0 (1):\penalty0 75--96, 2003.


\bibitem{Croux}
 K.~Croux and N.~Veraverbeke.
 \newblock {Non-parametric estimators for the probability of ruin}.
 \newblock \emph{Insurance: Mathematics and Economics}, 9\penalty0 :\penalty0 127--130, 1990.
\bibitem{Veraverbeke(1993)}
 N.~Veraverbeke.
 \newblock {Asymptotic estimates for the probability of ruin in a Poisson model with diffusion}.
 \newblock \emph{Insurance: Mathematics and Economics}, 13\penalty0 :\penalty0 57--62, 1993.



\bibitem{Mnatsakanov}
 R.~Mnatsakanov, L. L.~Ruymgaart, and F. H. ~Ruymgaart.
 \newblock {Nonparametric estimation of ruin probabilities
given a random sample of claims}.
 \newblock \emph{Mathematical Methods of Statistics}, 17\penalty0 (1):\penalty0 35--43, 2008.


\bibitem{Pitts}
 SM.~Pitts.
 \newblock {Nonparametric estimation of compound distributions with applications
in insurance}.
 \newblock \emph{Annals of the Institute of Statistical Mathematics}, 46\penalty0(3): \penalty0
   537--555, 1994.


 \bibitem{Bening}
 V. E. Bening and V. Y.~Korolev.
 \newblock {Nonparametric estimation of the ruin probability for generalized risk
processes}.
 \newblock \emph{Theory of Probability and its Applications}, 47\penalty0 (1):\penalty0
   1--16, 2002.



\bibitem{Shimizu2009}
 Y.~Shimizu.
 \newblock {A new aspect of a risk process and its statistical inference}.
 \newblock \emph{Insurance: Mathematics and Economics}, 44\penalty0 (1):\penalty0 70--77, 2009.

\bibitem{Shimizu2009b}
 Y.~Shimizu.
 \newblock {Functional estimation for Levy measures of semimartingales with Poissonian jumps}.
 \newblock \emph{Journal of Multivariate Analysis}, 100\penalty0 (6):\penalty0 1073-1092, 2009.


\bibitem{Shimizu}
 Y.~Shimizu.
 \newblock {Non-parametric estimation of the Gerber-Shiu function for the Wiener-Poisson risk model}.
 \newblock \emph{Scandinavian Actuarial Journal}, 2012\penalty0 (1):\penalty0 56--69, 2012.

 \bibitem{Shimizu(2006)}
 Y.~Shimizu.
 \newblock {Denstity estimation of $l\acute{e}vy$ measure for discrete observed diffusion processes with jumps}.
 \newblock \emph{J.Japan Statist. Soc.}, 36\penalty0 (1):\penalty0 37--62, 2006.

 \bibitem{Shimizu and Yoshida(2006)}
 Y.~Shimizu and N.~Yoshida.
 \newblock {Estimation of parameters for diffusion process with jumps from discrete observations}.
 \newblock \emph{Statistical inference for stochastic processes}, 9\penalty0 (3):\penalty0 227--277, 2006.



\end{thebibliography}
\end{document}